\newtheorem{theo}{Theorem}[section]
\newtheorem{lemma}[theo]{Lemma}
\newtheorem{prop}[theo]{Proposition}
\theoremstyle{definition}
\newtheorem{defi}[theo]{Definition}
\theoremstyle{definition}
\newtheorem{example}[theo]{Example}
\newtheorem*{notat}{Notation}
 \newcommand\R{{\mathbb{R}}}
 \newcommand\Z{{\mathbb{Z}}}
 \newcommand\F{{\mathbb{F}}}
 \DeclareMathOperator{\sys}{sys}
 \DeclareMathOperator{\area}{area}
 \DeclareMathOperator{\Hom}{Hom}
\title{Simplicial complexity of surface groups and systolic area}
   \author{Eugenio Borghini}
   \author{El\'ias Gabriel Minian}
   \address{Departamento  de Matem\'atica - IMAS\\
 FCEyN, Universidad de Buenos Aires. Buenos Aires, Argentina.}
\email{eborghini@dm.uba.ar ; gminian@dm.uba.ar}
\thanks{Researchers of CONICET. Partially supported by grant UBACyT 20020160100081BA}
\subjclass[2010]{53C23, 57M20, 57Q15, 57N16.}
\keywords{Systolic area, simplicial complexity, surface groups.}
\begin{document}

   % abstract 
   \begin{abstract}
  The simplicial complexity is an invariant for finitely presentable groups that was recently introduced by Babenko, Balacheff and Bulteau to study systolic area. The simplicial complexity $\kappa(G)$ was proved to be a good approximation of the systolic area $\sigma(G)$ for large values of $\kappa(G)$. In this paper we compute the simplicial complexity of all surface groups (both in the orientable and in the non-orientable case). This settles a problem raised by Babenko, Balacheff and Bulteau. We also prove that $\kappa(G\ast \Z)=\kappa(G)$ for any surface group $G$. This provides the first partial evidence in favor of the conjecture of the stability of the simplicial complexity under free product with free groups. The general stability problem, both for simplicial complexity and for systolic area, remains open. 
   \end{abstract}

   \maketitle

\section{Introduction}

Let $X$ be a connected, non-simply connected finite simplicial complex of dimension 2. Given a piecewise smooth Riemannian metric $g$ on $X$, the \textit{systole} $\sys(X,g)$ of $(X,g)$ is the length of the shortest non-contractible loop in $X$. The \textit{systolic area} $\sigma(X)$ of $X$ is defined as
\[
	\sigma(X) := \inf_{g} \frac{\area(X,g)}{\sys(X,g)^{2}},
\]
where the infimum is taken over all piecewise smooth Riemannian metrics on $X$. In \cite{Gr2}, Gromov defined the systolic area of a finitely presentable group $G$ as 
\[
	\sigma(G) := \inf_{X} \sigma(X),
\]
where the infimum is taken over the finite 2-dimensional simplicial complexes $X$ with fundamental group isomorphic to $G$. Gromov proved that $\sigma(G)$ is strictly positive unless $G$ is free (see \cite[Theorem 6.7.A]{Gr1}) and posed the problem of estimating the asymptotic behavior of the size of the set ${\mathcal G}_{\sigma}(C)$ of isomorphism classes of groups $G$ for which $\sigma(G) \leq C$ \cite{Gr2}. By combining topological and geometric techniques,   Rudyak and Sabourau proved in \cite{RS} an exponential upper bound to the size of ${\mathcal G}_{\sigma}(C)$, and considerably improved Gromov's lower bound for the systolic area of a non-free group. In \cite{RS} they raised the following question concerning the stability of the systolic area under free product with free groups: is $\sigma(G \ast \Z) = \sigma(G)$ for any non-free group $G$? (see \cite[Question 1.2]{RS}). The computation of $\sigma(G \ast T)$ for non-free groups $G$ and $T$ is a problem formulated by Gromov in (\cite[p.337]{Gr2}).

In \cite{BBB} Babenko, Balacheff and Bulteau introduced the \textit{simplicial complexity} of a finitely presentable group, which is a combinatorial invariant that approximates the systolic area. The simplicial complexity $\kappa(G)$ of a finitely presentable group $G$ is the minimum number of $2$-simplices of a simplicial complex of dimension $2$ with fundamental group isomorphic to $G$ (see \cite[Definition 2.1]{BBB}). Here, the number of 2-simplices of a simplicial complex of dimension $2$ may be thought of as a combinatorial avatar for the area. They showed that the simplicial complexity of $G$ constitutes a fairly good approximation to the systolic area of $G$ for large values of $\kappa(G)$. More concretely, given $\varepsilon > 0$,
\[
 2\pi \sigma(G) \leq \kappa(G) \leq \sigma(G)^{(1+\varepsilon)}
\]
for a group $G$ of free index 0, whenever $\kappa(G)$ is big enough (see \cite[Theorem 1.2]{BBB} for the precise statement). Using this comparison result, they provided a quite satisfactory answer to Gromov's question on the size of ${\mathcal G}_{\sigma}(C)$, improving the upper bound given in \cite{RS} (see \cite[Theorem 1.1]{BBB}). In view of the close relation between $\sigma(G)$ and $\kappa(G)$, they asked whether the equality $\kappa(G \ast \Z) = \kappa(G)$ holds for any group $G$ (see \cite[Section 2]{BBB}). Another problem posed in \cite{BBB} is the exact computation of the value of $\kappa$ for some groups (\cite[Examples 1,2]{BBB}). To the best of our knowledge, the only groups for which the value of $\kappa$ is known are the fundamental group of the projective plane $\Z_{2}$, of the torus $\Z \oplus \Z$, of the Klein bottle, and $\Z_3$, computed in \cite{Bul} mainly by an exhaustive analysis.

The purpose of this article is to provide some partial answers to the problems described above. In first place, we compute the simplicial complexity of the fundamental groups of all non-simply connected closed surfaces settling in the affirmative a conjecture from \cite{Bul} (here by closed surface we mean a compact connected smooth 2-manifold, orientable or non-orientable, without boundary).
\begin{theo}\label{kappa_surf}
Let $S$ be a non-simply connected closed surface. Then, $\kappa(\pi_1(S)) = \delta(S)$, where $\delta(S)$ is the minimum number of 2-simplices in a triangulation of $S$.
\end{theo}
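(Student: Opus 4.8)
The inequality $\kappa(\pi_1(S))\le\delta(S)$ is immediate: a triangulation of $S$ with $\delta(S)$ triangles is a $2$-dimensional simplicial complex with fundamental group $\pi_1(S)$. For the reverse inequality I would start from a triangle-minimal complex and extract a Heawood-type estimate. Let $X$ be a $2$-dimensional simplicial complex with $\pi_1(X)\cong\pi_1(S)$ and with the least possible number $f$ of triangles. Collapsing a free edge removes a triangle without changing the homotopy type, so $X$ has no free edges; after collapsing away any free vertices and, using that the fundamental group of a non-simply-connected closed surface is freely indecomposable and not free, discarding the $1$-dimensional part dangling off the $2$-dimensional part, we may assume that every vertex and every edge of $X$ lies in a triangle. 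Consequently every edge lies in at least two triangles.

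Writing $v,e,f$ for the numbers of vertices, edges and triangles, double counting edge--triangle incidences gives $3f\ge 2e$, simpliciality gives $e\le\binom{v}{2}$, and $\chi(X)=v-e+f$. Combining these, $f\ge 2\bigl(v-\chi(X)\bigr)$ and $v^{2}-7v+6\chi(X)\ge 0$, so $v\ge V_{0}(\chi(X))$ and
\[
 f\ \ge\ 2\bigl(V_{0}(\chi(X))-\chi(X)\bigr),
\]
where $V_{0}(c)=\bigl\lceil(7+\sqrt{49-24c})/2\bigr\rceil$ is the Heawood lower bound for the number of vertices of a triangulation of a surface of Euler characteristic $c$. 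By the Jungerman--Ringel theorem, $\delta(S)=2\bigl(V_{0}(\chi(S))-\chi(S)\bigr)$ for every closed surface except the orientable surface of genus $2$, the Klein bottle and $N_{3}$.

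Since $c\mapsto 2\bigl(V_{0}(c)-c\bigr)$ is decreasing whereas $\chi(X)\ge\chi_{\min}(\pi_1(S))=\chi(S)$ (the minimum of $\chi$ over $2$-complexes with this fundamental group being realized by $S$), the displayed bound yields $f\ge\delta(S)$ precisely when $X$ realizes the minimal Euler characteristic, that is $\chi(X)=\chi(S)$. Establishing this is the crucial step and, I expect, the main obstacle. When $S$ is aspherical the plan is: if $\chi(X)>\chi(S)$ then $H_{2}(X;\Z)$ surjects onto $H_{2}(\pi_1 S)$ with nonzero kernel, and a kernel class of minimal support has support a sub-pseudo-surface $W\subseteq X$; if $W=X$ then $X$ is a closed pseudo-surface with freely indecomposable fundamental group, hence a closed surface, forcing $X\cong S$ and $\chi(X)=\chi(S)$; if $W\subsetneq X$ one compresses $W$ to a disk inside $X$ without altering $\pi_1(X)$, contradicting minimality of $f$. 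This last compression should be underpinned by the fact that surface groups are $PD_{2}$ groups with vanishing $\widetilde{K}_{0}(\Z\pi_1)$, so that $\pi_2(X)$ is a free $\Z\pi_1$-module and a spherical summand can be split off. The non-aspherical case $S=\mathbb{R}P^{2}$, where $\pi_2\neq 0$ even for the minimal complex, must be treated on its own; however $\kappa(\Z_{2})=10=\delta(\mathbb{R}P^{2})$, $\kappa(\Z\oplus\Z)=14=\delta(T^{2})$ and $\kappa(\text{Klein})=16=\delta(\text{Klein})$ are already known from \cite{Bul}, so in practice only the orientable surfaces of genus $g\ge 2$ and the non-orientable surfaces $N_{k}$ with $k\ge 3$ remain.

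For these, once $\chi(X)=\chi(S)$ is known, the estimate above finishes the proof in all cases except the two Jungerman--Ringel exceptions $\Sigma_{2}$ and $N_{3}$, for which $\delta(S)=2\bigl(V_{0}(\chi(S))+1-\chi(S)\bigr)$. There one must rule out $v=V_{0}(\chi(S))$: with that many vertices the inequalities $3f\ge 2e$ and $e\le\binom{v}{2}$ become essentially tight, and a finer analysis shows that each near-tight possibility either makes $X$ a closed pseudo-surface --- hence a closed surface homeomorphic to $S$ with only $V_{0}(\chi(S))$ vertices, which does not exist --- or retains enough slack that $f\ge\delta(S)$ already. Transferring the non-existence of such an extremal triangulation from genuine triangulations to arbitrary $2$-complexes is the other point where the argument must be carried out by hand; combining it with the previous steps yields $\kappa(\pi_1(S))=\delta(S)$.
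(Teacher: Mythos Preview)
Your overall architecture is right: the upper bound is trivial, the Heawood inequalities give $f\ge 2(V_0(\chi(X))-\chi(X))$ once every edge lies in at least two triangles, and the whole problem reduces to showing $\chi(X)\le\chi(S)$ for a suitable complex, with extra care for the Jungerman--Ringel exceptions. You also correctly observe $\chi(X)\ge\chi(S)$ for any $2$-complex with $\pi_1(X)\cong\pi_1(S)$, so the issue is the reverse inequality.

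The gap is in your plan for that reverse inequality. Suppose $\chi(X)>\chi(S)$, so $H_2(X)\to H_2(\pi_1 S)$ has nontrivial kernel. You propose to ``compress $W$ to a disk'' or, via the freeness of $\pi_2(X)$ over $\Z\pi_1$, to split off an $S^2$ and contradict minimality of $f$. But a homotopy splitting $X\simeq Y\vee S^2$ gives no control over the number of $2$-simplices needed to realize $Y$ simplicially; there is no mechanism here that produces a simplicial complex with the same $\pi_1$ and strictly fewer triangles. Likewise, ``compressing $W$'' is a homotopy-theoretic move, not a simplicial one, and nothing guarantees it lowers $f$. So the contradiction with minimality is not established, and the crucial step is missing.

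The paper sidesteps this entirely by \emph{dropping} the requirement that the reduced complex keep the fundamental group. Starting from $K$ with $\pi_1(K)\cong\pi_1(S)$ (more generally $\pi_1(S)\ast T$), one simply deletes a $2$-simplex in the support of any surplus $H_2$-class; the resulting subcomplex has one fewer triangle and is only required to remain ``homologically $\pi_1(S)$-full'' (same $H_0,H_1$, and $H_2$ still surjecting onto $H_2(\pi_1 S)$). Iterating, then collapsing free faces and contracting or peeling off maximal edges, yields a complex $L$ with $\alpha_2(L)\le\alpha_2(K)$, every edge in at least two triangles, and $\dim H_2(L)=\dim H_2(\pi_1 S)$. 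The bound $\chi(L)\le\chi(S)$ is then obtained not from $\pi_1$ but from the cup product: Poincar\'e duality for $S$ (property~(A)) forces the $S^1$-factors created when removing maximal edges to be accounted for by $H^1(T)$ rather than by $H^1(\pi_1 S)$, which caps the drop in $\chi$. This cohomological argument is the substitute for your unproven $\chi(X)=\chi(S)$, and it is what makes the Heawood estimate bite. The exceptional surfaces are then handled by showing that equality in the Heawood inequalities, together with property~(A) for $L$, would force $L$ to be homeomorphic to $S$, contradicting Jungerman--Ringel.
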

The numbers $\delta(S)$ were completely computed by Ringel \cite{Rin}, for non-orientable surfaces, and by Jungerman and Ringel \cite{JR} in the orientable case (see Theorem \ref{jungerman} below).

We also show that $\kappa(G \ast \Z) = \kappa(G)$ for any surface group $G$. Here by a surface group we mean the fundamental group of any non-simply connected closed surface $S$. This provides some (admittedly partial) evidence in favor of the validity of $\kappa(G \ast \Z) = \kappa(G)$ for any group $G$. Actually, we prove the following stronger estimate.
\begin{theo}\label{kappa_surf_free_prod}
Let $G$ be a surface group and $T$ a finitely presentable group. Then, $\kappa(G \ast T) \geq \kappa(G)$. In particular, $\kappa(G \ast \Z) = \kappa(G)$.
\end{theo}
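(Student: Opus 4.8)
The statement splits into an easy half and a substantial half. For the ``in particular'', the bound $\kappa(G\ast\Z)\le\kappa(G)$ is elementary: given a $2$-complex $X$ realizing $\kappa(G)$, glue a triangulated circle (two new vertices, three new edges, no new $2$-simplices) to a vertex of $X$; the result is a $2$-complex with fundamental group $G\ast\Z$ and still $\kappa(G)$ triangles. The reverse bound is the case $T=\Z$ of the main inequality, so it suffices to prove $\kappa(G\ast T)\ge\kappa(G)$, which by Theorem~\ref{kappa_surf} means $\kappa(G\ast T)\ge\delta(S)$, where $S$ denotes the closed surface with $\pi_1(S)\cong G$.

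Fix a connected $2$-complex $X$ with $\pi_1(X)\cong G\ast T$ and with $\kappa(G\ast T)$ triangles. The point I would exploit is that $G$ is a \emph{retract} of $\pi_1(X)$: the free product comes with homomorphisms $j\colon G\hookrightarrow G\ast T$ and $\phi\colon G\ast T\twoheadrightarrow G$ satisfying $\phi\circ j=\id_G$, and I want to realize this splitting by maps of spaces and then read it off in cohomology. Since $S$ is aspherical whenever $S\neq\mathbb{RP}^2$, a map $f\colon X\to S=K(G,1)$ with $f_\ast=\phi$ exists; and even for $S=\mathbb{RP}^2$ such an $f$ exists, because after realizing $\phi$ on the $1$-skeleton of $X$ the only obstruction to extending over a $2$-simplex is that $\phi$ kill its boundary loop, which is automatic. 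Going the other way, define $g$ on the $1$-skeleton of $S$ realizing $j$ on $\pi_1$ and extend over the $2$-cells of $S$: this is possible since each defining relator $w$ of $G$ satisfies $j(w)=1$ in $\pi_1(X)$. Then $f\circ g\colon S\to S$ induces $\phi\circ j=\id$ on $\pi_1$. When $S\neq\mathbb{RP}^2$ this forces $f\circ g\simeq\id_S$ by asphericity; when $S=\mathbb{RP}^2$ a self-map inducing the identity on $\pi_1=\Z_2$ is automatically the identity on $H^\ast(\mathbb{RP}^2;\F_2)=\F_2[x]/(x^3)$ (it is the identity on $H^1$, hence on $x^2$, hence on $H^2$). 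In all cases, therefore, $(f\circ g)^\ast$ is the identity of $H^\ast(S;\F_2)$.

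It follows that $f^\ast\colon H^\ast(S;\F_2)\to H^\ast(X;\F_2)$ is a split monomorphism of graded rings, split by $g^\ast$. In particular the cup-product form of $S$ transplants into $X$: writing $W:=f^\ast H^1(S;\F_2)\subseteq H^1(X;\F_2)$, a subspace of dimension $b_1(S;\F_2)$, and $\lambda:=g^\ast\colon H^2(X;\F_2)\to H^2(S;\F_2)\cong\F_2$, the pairing $(w_1,w_2)\mapsto\lambda(w_1\cup w_2)$ on $W$ agrees with the cup-product pairing of $S$, which is nondegenerate by Poincaré duality with $\F_2$ coefficients; thus $X$ carries a nondegenerate ``cup form'' of rank $b_1(S;\F_2)$ valued in an $\F_2$-summand of $H^2(X;\F_2)$, where $b_1(S;\F_2)$ equals $2h$ for the orientable surface of genus $h$ and $k$ for the non-orientable surface of genus $k$. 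This is exactly the cohomological configuration that makes a $2$-complex large, and it should be fed into the lower-bound half of Theorem~\ref{kappa_surf}: the argument showing that a $2$-complex $Y$ with $\pi_1(Y)\cong\pi_1(S)$ has at least $\delta(S)$ triangles only ever uses (through the Heawood-type vertex count underlying Theorem~\ref{jungerman}) that $S$ is a homotopy retract of $Y$ — equivalently, for $\mathbb{RP}^2$, an $\F_2$-cohomology retract — and this is precisely what the construction above provides for $X$. Hence $X$ has at least $\delta(S)=\kappa(G)$ triangles.

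The crux of the argument, and the step I expect to take the most care, is this last one: one must isolate from the proof of Theorem~\ref{kappa_surf} (or re-derive) a lower bound whose hypothesis is the ``$S$ is a homotopy / $\F_2$-cohomology retract of $X$'' condition rather than the stronger ``$\pi_1(X)\cong\pi_1(S)$'', and verify that the combinatorial inequalities among the numbers of vertices, edges and triangles of $X$ never appeal to anything finer. The remaining subtlety — that $\mathbb{RP}^2$ is not aspherical, so that $f\circ g$ need not be homotopic to the identity — is dissolved by working throughout with $\F_2$ coefficients, over which $f\circ g$ is still a cohomology equivalence.
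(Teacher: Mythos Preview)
Your high-level strategy is sound and is, in fact, the same as the paper's in all essentials: use the nondegenerate $\F_2$-cup form on $S$ (what the paper calls ``property (A)'') together with a Heawood-type inequality to force many $2$-simplices, and treat $\mathbb{RP}^2$ via $\F_2$-cohomology rather than asphericity. The retract packaging you propose is equivalent, at the level of $\F_2$-(co)homology, to the paper's setup with $\pi_1(K)=G\ast T$: in both cases one has a map $X\to K_G$ that is split surjective on $H_1$ and surjective on $H_2$, and that is all the subsequent arguments use.

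The gap is that you stop exactly where the work begins. Your last paragraph correctly names the crux --- one must show that any $2$-complex $X$ admitting $S$ as an $\F_2$-cohomology retract has at least $\delta(S)$ triangles --- but you do not carry it out, and you cannot simply ``re-use the proof of Theorem~\ref{kappa_surf}'': in this paper Theorems~\ref{kappa_surf} and~\ref{kappa_surf_free_prod} are proved \emph{simultaneously} by the same machinery (Propositions~\ref{non_excep} and~\ref{exceptional}, resting on Lemma~\ref{main_tech}), so there is no independent proof of the $T=\{1\}$ case to lean on. What actually has to happen is: (i) iteratively delete $2$-simplices of $X$ lying in the support of $H_2$-classes killed by the projection to $H_2(S)$ until $\dim H_2=1$ (Lemma~\ref{kill_gamma}); (ii) collapse free faces and strip maximal edges to reach $L$ with every edge in at least two triangles; (iii) use property~(A) to show that the $S^1$-factors produced in step~(ii) inject, on $H^1$, into the ``extra'' summand of $H^1(X)$, giving $\chi(L)\le\chi(S)$; (iv) apply the Heawood inequality (Lemma~\ref{euler}); and (v) for the three exceptional surfaces $N_2,N_3,M_2$, rule out the borderline face-vector configurations by hand (Proposition~\ref{exceptional}). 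Your retract framework accommodates all of this, but none of it is avoided; the proposal as written is a correct outline missing its entire technical content.
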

To prove these results, we derive a sharp lower bound of $\kappa(G \ast T)$ for any group $G$ whose cohomology ring satisfies a certain regularity property. The proof of this estimate is based on rather elementary techniques that we developed previously in \cite{BM}.

The outline of the article is as follows. In Section 2, we recall the pertinent definitions and results from \cite{BM} and prove our main technical result. In Section 3, we use the main lemma to estimate the mentioned lower bound on $\kappa(G \ast T)$. Finally, in Section 4 we prove Theorems \ref{kappa_surf} and \ref{kappa_surf_free_prod}. For almost all closed surfaces ({\it non-exceptional} surfaces in the vocabulary of \cite{BM}), these results will follow straightforwardly from the lower bound provided by Theorem \ref{lower_bound}. In contrast, for handling the {\it exceptional} cases some ad-hoc arguments will be required (cf. \cite[Section 3]{BM}).

\section{Main technical result}

In this section we prove a central technical lemma that will allow us to give a lower bound of the simplicial complexity for groups of the form $G \ast T$, where $G$, $T$ are finitely presentable groups and the cohomology ring of $G$ satisfies a certain regularity property. By the cohomology ring of a group $G$ we will understand its cohomology as a discrete group, i.e. the cohomology of an Eilenberg-MacLane space $K(G,1)$. We will work with reduced (co)homology and the coefficient ring for (co)homology groups will be $\F_2$. Throughout the article, $G$ and $T$ will denote finitely presentable groups, and all the simplicial complexes that we deal with will be finite. We recall first the definition of property (A) from \cite[Definition 2.3]{BM}.

\begin{defi}\label{property}
Let $X$ be a topological space. We say that the cohomology ring $H^{*}(X)$ (with coefficients in $\F_2$) satisfies \textit{property (A)} if for every non-trivial $\alpha$ in $H^{1}(X)$, there exists $\beta \in H^{1}(X)$ such that $\alpha \cup \beta$ is non-trivial in $H^{2}(X)$.
\end{defi}

We will say that the cohomology ring of a group satisfies property (A) whenever the cohomology ring of a $K(G,1)$ space does.

\begin{example}\label{rem_surfaces}
Any surface group (orientable or non-orientable) satisfies property (A) by Poincaré Duality. 

More generally, any one-relator group $G$ with $H_2(G) =\F_2$ and with a non-degenerate cup product form $H^{1}(G) \times H^{1}(G) \to H^{2}(G) = \F_2$, satisfies property (A). Using the computation of the cohomology ring of one-relator groups of \cite{Ra}, one may obtain an infinite amount of examples of such groups. As concrete examples, the Baumslag-Solitar groups $BS(m,n)$ satisfy property (A) whenever $m$ and $n$ are odd.
\end{example}

Let $K$ be a connected simplicial complex of dimension $2$ such that $\pi_1(K) = G \ast T$. Observe that $K$ is the 2-skeleton of an aspherical CW-complex $X$ (possibly infinite dimensional). Since the fundamental group of $X$ is isomorphic to $G \ast T$, $X$ is an Eilenberg-MacLane space $K(G \ast T, 1)$. By a theorem of Whitehead (see for example \cite[Theorem 7.3]{B}), $X$ is homotopy equivalent to a space of the form $K_G \vee K_{T}$, the wedge union of a $K(G,1)$ space and a $K(T,1)$ space. Informally speaking, our first objective is to obtain a subcomplex of $K$ in which we have killed all the $2$-dimensional homology classes of $K$ that do not correspond to classes in $H_2(G)$. We start by giving a definition.

\begin{defi}
Let $X$ be a CW-complex of dimension at least $2$, together with a homotopy equivalence $h : X \to K_G \vee K_{T}$, where $K_G$, $K_{T}$ are defined as above. Assume further that its $2$-skeleton $X^{(2)}$ is a finite simplicial complex. 
Let $M \leq X^{(2)}$ be a (simplicial) subcomplex satisfying the following properties:
\begin{enumerate}
	\item The inclusion $i: M \hookrightarrow X$ induces isomorphisms $i_{*}:H_{n}(M) \to H_{n}(X)$ for $n < 2$.
	\item The composition $H_2(M) \xrightarrow[]{i_{*}} H_2(X) \equiv H_2(K_G)\oplus H_2(K_T)= H_2(G) \oplus H_2(T) \xrightarrow[]{p} H_2(G)$ is an epimorphism, where $p$ is the projection and the isomorphism $H_2(X) \equiv H_2(K_G) \oplus H_2(K_T)$ is induced by $h$.
\end{enumerate}
We will say that such a subcomplex $M$ is \textit{homologically $G$-full} with respect to $h$, or simply \textit{homologically $G$-full} if the homotopy equivalence $h$ is clear from the context.
\end{defi}

The next result says, roughly, that we can kill the ``extra'' homology classes in $H_2(X)$ one at a time. The result is inspired in \cite[Proposition 2.7]{BM}.

\begin{lemma}\label{kill_gamma}
Let $X$ be a CW-complex of dimension at least $2$ homotopy equivalent to a space of the form $K_G \vee K_{T}$ and such that its $2$-skeleton $X^{(2)}$ is a finite simplicial complex. Let $M \leq X^{(2)}$ be a homologically $G$-full subcomplex. If $\dim H_2(M) > \dim H_2(G)$, there exists a 2-simplex $\sigma \in M$ such that $M \setminus \sigma$ is homologically $G$-full. Moreover, $\dim H_2(M \setminus \sigma) = \dim H_2(M) - 1$.
\end{lemma}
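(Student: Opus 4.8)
The plan is to exploit that $M$, being $2$-dimensional, satisfies $H_2(M)=Z_2(M)$: every $2$-dimensional homology class of $M$ is an honest simplicial $2$-cycle with a well-defined support. Since $p\circ i_*\colon H_2(M)\to H_2(G)$ is surjective while $\dim H_2(M)>\dim H_2(G)$, its kernel $K$ is nonzero. I would pick any nonzero $2$-cycle $\gamma\in K\subseteq Z_2(M)$ and let $\sigma$ be any $2$-simplex occurring in $\supp(\gamma)$; the claim to verify is then that $M':=M\setminus\sigma$ is homologically $G$-full and $\dim H_2(M')=\dim H_2(M)-1$.

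The core of the argument is the homological bookkeeping attached to deleting $\sigma$. Since $M'$ and $M$ share the same $1$-skeleton, $Z_1(M')=Z_1(M)$; and since $\sigma$ appears in $\gamma$, writing $\gamma=\sigma+c$ with $c\in C_2(M')$ gives $\partial c=\partial\sigma$, hence $\partial C_2(M')=\partial C_2(M)$, i.e. $B_1(M')=B_1(M)$. Therefore the inclusion induces an isomorphism $H_1(M')\xrightarrow{\ \cong\ }H_1(M)$, which composed with the given isomorphism $H_1(M)\xrightarrow{\ \cong\ }H_1(X)$ yields property~(1) in degree $1$; property~(1) in degree $0$ is automatic, as removing an open $2$-cell keeps a connected $2$-complex connected. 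The same identity $\partial C_2(M')=\partial C_2(M)$ shows that the ``coefficient of $\sigma$'' functional on $Z_2(M)$ is onto $\F_2$ with kernel $Z_2(M')$, so $\dim H_2(M')=\dim Z_2(M')=\dim Z_2(M)-1=\dim H_2(M)-1$, which is the ``moreover'' assertion.

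For property~(2), I would note that $H_2(M')=Z_2(M')$ is exactly the hyperplane of $Z_2(M)=H_2(M)$ on which the $\sigma$-coefficient vanishes, and that $\gamma$ has nonzero $\sigma$-coefficient, so $H_2(M)=H_2(M')\oplus\langle\gamma\rangle$. Because $\gamma\in K=\ker(p\circ i_*)$, the restriction of $p\circ i_*$ to $H_2(M')$ has the same image as $p\circ i_*$ itself, namely all of $H_2(G)$. This establishes (2) and completes the proof.

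I expect the only mildly delicate point to be the middle paragraph: isolating precisely when deleting a $2$-simplex from a $2$-complex can alter $H_1$, and recognizing that $\sigma$ lying in the support of \emph{some} $2$-cycle is exactly the condition that simultaneously preserves property~(1) and forces $\dim H_2$ to drop by one. Everything else is elementary linear algebra over $\F_2$, and the whole argument runs parallel to \cite[Proposition~2.7]{BM}.
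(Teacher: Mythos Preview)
Your proof is correct and follows essentially the same approach as the paper's: pick a nonzero cycle $\gamma$ in $\ker(p\circ i_*)$, remove a $2$-simplex $\sigma$ from its support, and verify the required properties. The only cosmetic differences are that the paper computes the dimension drop via the Euler characteristic and checks surjectivity of $p\circ j_*$ by a two-case argument, whereas you use the $\sigma$-coefficient functional and the splitting $H_2(M)=H_2(M')\oplus\langle\gamma\rangle$; these are equivalent formulations of the same idea.
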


\begin{proof}
Since by hypothesis $\dim H_2(M) > \dim H_2(K_G)$ there is a non-trivial class $B$ in the kernel of the linear map $p \circ i_{*}: H_2(M) \to H_2(K_G)$. Let $\sigma$ be a 2-simplex of $M$ in the support of $B$. The topological boundary $\partial \sigma$ viewed as a chain in $C_1(M \setminus \sigma)$ is the boundary of the $2$-chain $B - \sigma$. Hence the inclusion induces the zero morphism $ H_1(\partial \sigma) \to H_1(M\setminus{\sigma})$. It follows that the inclusion $M \setminus \sigma \hookrightarrow M$ induces isomorphisms $H_n(M \setminus \sigma) \to H_n(M)$ for $n < 2$. It remains to verify the surjectivity of $p \circ j_{*}: H_2(M \setminus \sigma) \to H_2(K_G)$, where $j$ is the inclusion $j: M\setminus \sigma \hookrightarrow X$. Let $[Z]$ be a class in $H_2(K_G)$. By hypothesis, there is some class $C \in H_2(M)$ such that $p \circ i_{*} [C] = [Z]$. If $\sigma$ does not belong to the support of $C$, when viewed as a class in $H_2(M \setminus \sigma)$ we have $p \circ j_{*} [C] = [Z]$. In the other case, the 2-chain $C + B$ is a well defined 2-cycle in $M \setminus \sigma$ and $p \circ j_{*}[C + B] = p \circ i_{*}[C] + p \circ i_{*}[B] = p \circ i_{*}[C] = [Z]$. Hence, in any case $p \circ j_{*}: H_2(M \setminus \sigma) \to H_2(K_G)$ is an epimorphism. The fact that $\dim H_2(M \setminus \sigma) = \dim H_2(M) - 1$ follows immediately from the Euler characteristic, since $\chi(M \setminus \sigma) = \chi(M) - 1$.
\end{proof}

\begin{notat} Given a finitely presentable group $G$, we will denote by $\chi_{\leq 2}(G)$ the \textit{2-truncated} Euler characteristic of $G$, that is $\chi_{\leq 2}(G) := \dim H_2(G) - \dim H_1(G) + \dim H_0(G)$.
\end{notat}

Recall that a simplex $\sigma$ of a simplicial complex $K$ is a \textit{free face} of $K$ if there is a unique simplex $\tau \in K$ containing $\sigma$ properly. In that case, we say that there is an (elementary) collapse from $K$ to the subcomplex $L$ obtained from $K$ by removing the free face $\sigma$ together with $\tau$. Note that the inclusion $L \subseteq K$ is, in particular, a strong deformation retract.

\begin{lemma}\label{main_tech}
Let $K$ be a (finite) connected simplicial complex of dimension 2 with fundamental group isomorphic to $G \ast T$, and suppose that the cohomology ring of $G$ satisfies property (A). Then, there is another simplicial complex $L$ of dimension at most 2 with no more 2-simplices than $K$ such that $\chi(L) \leq \chi_{\leq 2}(G)$, $\dim H_2(L) = \dim H_2(G)$ and every edge of $L$ is the face of at least two 2-simplices.
\end{lemma}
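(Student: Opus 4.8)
\emph{Overview and setup.} The plan is to run the following chain of reductions: realize $K$ as the $2$-skeleton of an Eilenberg--MacLane space, kill the superfluous $2$-dimensional homology one simplex at a time via Lemma~\ref{kill_gamma}, collapse away all free faces, delete the purely $1$-dimensional part, and finally identify the connected components to a single point; property~(A) will enter only in the fourth step, to prevent $\dim H_1$ from getting too small. Concretely, as in the paragraph preceding the statement, $K = X^{(2)}$ for an aspherical complex $X$ with $\pi_1(X)\cong G\ast T$, and a homotopy equivalence $h\colon X\simeq K_G\vee K_T$ lets us identify $H_*(X)\cong H_*(G)\oplus H_*(T)$ and $H^*(X)\cong H^*(G)\oplus H^*(T)$ (the latter in positive degrees, with vanishing cross products). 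Being a $2$-skeleton, the inclusion $K\hookrightarrow X$ is an isomorphism on $H_n$ for $n<2$ and onto on $H_2$, so $K$ is homologically $G$-full with respect to $h$.

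\emph{Reduction.} Iterating Lemma~\ref{kill_gamma} — each application deletes one $2$-simplex and lowers $\dim H_2$ by $1$ — produces a homologically $G$-full subcomplex $M_0\le K$ with $\dim H_2(M_0)=\dim H_2(G)$ and with no more $2$-simplices than $K$; from the first defining property of ``$G$-full'' we also get that $M_0$ is connected and $\dim H_1(M_0)=\dim H_1(G)+\dim H_1(T)$. Now collapse all free faces of $M_0$ to obtain $M_1$. Since an elementary collapse is a strong deformation retract, $M_1$ has the same homology and Euler characteristic as $M_0$, no more $2$-simplices, the maps $H_n(M_1)\to H_n(X)$ are isomorphisms for $n<2$, and the composite of $i_*\colon H_2(M_1)\to H_2(X)$ with the projection to $H_2(G)$ is an isomorphism. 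In $M_1$ no edge is a face of exactly one $2$-simplex (it would be a free face). Let $L$ be the subcomplex of $M_1$ generated by its $2$-simplices, and let $\Gamma$ be the subcomplex of all vertices of $M_1$ together with all edges contained in no $2$-simplex, so that $M_1=L\cup\Gamma$ with $\dim\Gamma\le 1$ and $L\cap\Gamma$ a finite discrete set. Then every edge of $L$ is a face of at least two $2$-simplices, $L$ has the same number of $2$-simplices as $M_1$, and Mayer--Vietoris gives $H_2(L)\cong H_2(M_1)$, so $\dim H_2(L)=\dim H_2(G)$. (If $K$ has no $2$-simplices, then $G\ast T$ is free, hence $G$ is free and, by property~(A), trivial, and the statement is immediate.)

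\emph{The key estimate $\dim H_1(L)\ge\dim H_1(G)$.} Let $\rho\colon L\hookrightarrow M_1\xrightarrow{\,i\,}X\xrightarrow{\,r\,}K_G$, where $r$ collapses $K_T$; it suffices to show that $\rho^*\colon H^1(G)\to H^1(L)$ is injective. Fix $0\ne\alpha\in H^1(G)$ and, using property~(A), choose $\beta\in H^1(G)$ with $\alpha\cup\beta\ne 0$ in $H^2(G)$. Since $r^*$ is the split ring monomorphism $H^*(G)\hookrightarrow H^*(G)\oplus H^*(T)=H^*(X)$, $x\mapsto(x,0)$, the class $(\alpha\cup\beta,0)=(\alpha,0)\cup(\beta,0)$ is nonzero in $H^2(X)$. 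Over $\F_2$, $\ker\!\big(i^*\colon H^2(X)\to H^2(M_1)\big)$ is the annihilator of $\im\!\big(i_*\colon H_2(M_1)\to H_2(X)\big)$; because the composite $H_2(M_1)\to H_2(X)\to H_2(G)$ is an isomorphism, this image is the graph $\{(x,\varphi x):x\in H_2(G)\}$ of some linear map $\varphi$, and $\langle(\alpha\cup\beta,0),(x,\varphi x)\rangle=\langle\alpha\cup\beta,x\rangle$, which is nonzero for a suitable $x$ since $\alpha\cup\beta\ne 0$. Hence $i^*(\alpha\cup\beta,0)\ne 0$. Finally the restriction $H^2(M_1)\to H^2(L)$ is an isomorphism, because all $2$-simplices of $M_1$ lie in $L$ (equivalently, $H^2(\Gamma)=0$). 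Therefore $\rho^*\alpha\cup\rho^*\beta=\rho^*(\alpha\cup\beta)\ne 0$ in $H^2(L)$, so $\rho^*\alpha\ne 0$, and consequently $\dim H_1(L)=\dim H^1(L)\ge\dim H^1(G)=\dim H_1(G)$.

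\emph{Finishing.} Let $\bar L$ be obtained from $L$ by identifying one vertex of each connected component to a single point. No two distinct simplices of $L$ become identified (they would have to lie in different components), so $\bar L$ is a simplicial complex with exactly the same edges and $2$-simplices as $L$; thus every edge of $\bar L$ is a face of at least two $2$-simplices and $\bar L$ has no more $2$-simplices than $K$. Moreover $\bar L$ is connected, and wedging at points changes neither $H_1$ nor $H_2$, so $\dim H_2(\bar L)=\dim H_2(G)$ and $\dim H_1(\bar L)=\dim H_1(L)\ge\dim H_1(G)$, whence
\[
\chi(\bar L)=1-\dim H_1(\bar L)+\dim H_2(\bar L)\le 1-\dim H_1(G)+\dim H_2(G)=\chi_{\le 2}(G).
\]
Thus $L:=\bar L$ has all the required properties. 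I expect the main obstacle to be the key estimate above: tracing a nonzero cup product of $G$ through $H^*(G)\to H^*(X)\to H^*(M_1)\to H^*(L)$, which forces one to simultaneously pin down $\ker(i^*)$ using the homological $G$-fullness of $M_1$ and to observe that restriction to $L$ loses no degree-$2$ cohomology.
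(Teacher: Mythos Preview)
Your proof is correct and follows the same overall strategy as the paper: realize $K$ as the $2$-skeleton of a $K(G\ast T,1)$, kill excess $H_2$ via Lemma~\ref{kill_gamma}, collapse free faces, strip off the purely $1$-dimensional part, and invoke property~(A) to control the Euler characteristic of what remains. The genuine difference is in how the last two steps are carried out.

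The paper handles the maximal edges of $M$ one at a time---contracting a separating edge, or splitting off an $S^1$ from a non-separating one---so as to manufacture a homotopy equivalence $M\simeq L\vee_{i=1}^m S^1$; property~(A) is then used to show that under $(f^*)^{-1}$ the factor $H^1(\vee_i S^1)$ lands in $H^1(T)$, giving $m\le\dim H_1(T)$ and hence $\chi(L)\le\chi_{\le 2}(G)$. You instead take $L$ to be the subcomplex of $M_1$ generated by its $2$-simplices, use the annihilator description of $\ker i^*$ together with property~(A) to prove directly that $H^1(G)\hookrightarrow H^1(L)$, and finally wedge the components of $L$ to restore connectedness. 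Your argument is a bit more streamlined and avoids the edge-by-edge iteration; the paper's version, on the other hand, produces an explicit map $f\colon L\vee_i S^1\to K_G\vee K_T$ with controlled behavior on $H^1$, and this extra structure is reused verbatim in the proof of Lemma~\ref{prop_A} (needed for the exceptional surfaces $N_2$, $N_3$, $M_2$). So your $L$ is perfectly adequate for Lemma~\ref{main_tech} itself, but if you continue to the exceptional cases you will have to redo the property~(A) argument for $L$ in that setting rather than quote it from here.
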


\begin{proof}
Let $X$ be an Eilenberg-MacLane space $K(G \ast T, 1)$ such that $X^{(2)}=K$. Then there is a map $i: K \to K_G \vee K_{T}$ inducing an isomorphism in $H_n$ for $n = 0,1$ and an epimorphism in $H_2$, where $K_G$ and $K_{T}$ are respectively a $K(G,1)$ and a $K(T, 1)$ space as before. Since the projection $H_2(X) \equiv H_2(G) \oplus H_2(T) \to H_2(G)$ is surjective, $K$ is a homologically $G$-full subcomplex of $X$. By applying inductively Lemma \ref{kill_gamma}, we obtain a subcomplex $M$ of $K$ that is homologically $G$-full and such that $\dim H_2(M) = \dim H_2(G)$. After collapsing the free faces of $M$, we may assume that $M$ has no edge that is the face of a unique 2-simplex. Suppose there is a maximal edge $e = \{a,b\}$ in $M$ (otherwise we are done, since we may take the desired complex $L$ as $M$). If there is no path between $a$ and $b$ in $M \setminus e$, the quotient $M / e$ has a natural structure of simplicial complex with one less maximal edge than $M$. If on the contrary, $a$ and $b$ are joined by some path in $M \setminus e$, $M$ is homotopy equivalent to a CW-complex of the form $Z \vee S^{1}$, where $Z$ is a simplicial complex of dimension at most 2. After applying, if needed, finitely many of these moves, we get a CW-complex of the form $L \vee_{i=1}^{m} S^{1}$ homotopy equivalent to $M$, where $L$ is a simplicial complex with no more 2-simplices than $M$ (hence than $K$) in which every edge is the face of at least two 2-simplices. It remains to verify the bound on the Euler characteristic of $L$. Since $L \vee_{i=1}^{m} S^{1}$ is homotopy equivalent to $M$, clearly
\[
	\chi(M) = \chi(L \vee_{i=1}^{m} S^{1}) = \chi(L) - m.
\]
On the other hand, by construction $\chi(M) = \chi_{\leq 2}(G) - \dim H_1(T)$, since $\dim H_2(M) = \dim H_2(G)$ and the first homology group of $M$ is isomorphic to $H_1(K_G \vee K_{T}) = H_1(G) \oplus H_1(T)$. Now, by composing with the equivalence $L \vee_{i=1}^{m} S^{1} \simeq M$ we obtain a map $f: L \vee_{i=1}^{m} S^{1} \to K_G \vee K_{T}$ which induces isomorphism in $H_n$ for $n = 0,1$ and such that $p \circ f_{*}: H_2(L \vee_{i=1}^{m} S^{1}) = H_2(L) \to H_2(G)$ is an epimorphism. In particular, dualizing we get an isomorphism
\[H^{1}(K_G \vee K_{T}) = H^{1}(G) \times H^{1}(T) \to H^{1}(L \vee_{i=1}^{m} S^{1}) = H^{1}(L) \times H^{1}(\vee_{i=1}^{m} S^{1}).\]
Let $(0,a) \in H^{1}(L) \times H^{1}(\vee_{i=1}^{m} S^{1})$ be a non-trivial class and suppose that $(\alpha, \delta) \in H^{1}(G) \times H^{1}(T)$ is the unique class such that $f^{*}(\alpha, \delta) = (0,a)$. We claim that $\alpha = 0$. Indeed, suppose that it was not the case. Then, since the cohomology ring of $G$ satisfies property (A) there is a class $\beta \in H^{1}(G)$ with $\alpha \cup \beta \neq 0$. Consider the class $f^{*}( (\alpha, \delta) \cup (\beta, 0) ) = f^{*}(\alpha \cup \beta, 0) \in H^{2}(L) = H^{2}(L) \times H^{2}(\vee_{i=1}^{m} S^{1})$. It is non-trivial: take a class $\lambda \in H_2(G)$ such that $(\alpha \cup \beta) \lambda \neq 0$ (here we use the identification $H^{2}(G) = \Hom(H_2(G), \F_2)$). Since $M$ is homologically $G$-full, there is some class $\gamma \in H_2(L) \equiv H_2(M)$ such that $f_{*}(\gamma) = (\lambda, \eta)$, for some $\eta \in H_2(T)$. Then,
\[
	f^{*}(\alpha \cup \beta, 0)\gamma = (\alpha \cup \beta, 0) f_{*}(\gamma) = (\alpha \cup \beta, 0) (\lambda, \eta) \neq 0.
\]
On the other hand, from the identity
\[
	f^{*}( (\alpha, \delta) \cup (\beta, 0) ) = (0,a) \cup f^{*}(\beta, 0) = 0
\]
we obtain a contradiction, proving the claim. We conclude that the inverse of $f^{*}: H^{1}(G) \times H^{1}(T) \to H^{1}(L) \times H^{1}(\vee_{i=1}^{m} S^{1})$ restricts to give a monomorphism $H^{1}(\vee_{i=1}^{m} S^{1}) \to H^{1}(T)$. Hence, $m \leq \dim H_1(T)$ and, since $\chi(L) = \chi_{\leq 2}(G) - (\dim H_1(T) - m)$, the result follows.
\end{proof}

\section{The lower bound}

This section is devoted to the proof of the announced lower bound on the simplicial complexity for groups of the form $G \ast T$, where $G$, $T$ are finitely presentable groups and $G$ satisfies property (A).
\par We begin by fixing some notations (see \cite[Section 2]{BM}).

\begin{notat}
Let $k \in \Z$, $k \leq 2$. We denote by $\rho(k)$ the integer number defined as
\[
	\rho(k) := \left\lceil \frac{7 + \sqrt{49-24 k}}{2} \right\rceil.
\]
By abuse of notation, if $K$ is a simplicial complex of dimension $2$ such that  $\chi(K) \leq 2$ we will write $\rho(K)$ to mean $\rho(\chi(K))$.
Also, we will denote by $\alpha_i(K)$ the number of $i$-simplices of $K$.
\end{notat}

We prove now a simple result that links the special properties of the simplicial complex $L$ from the statement of Lemma \ref{main_tech} to a lower bound on its number of 2-simplices $\alpha_2(L)$. In what follows we will understand that a simplicial complex of dimension 2 is of strict dimension 2, i.e. it has at least one 2-simplex.

\begin{lemma}\label{euler}(cf. \cite[Lemma 2.2]{BM}).
Let $L$ be a connected simplicial complex of dimension 2 in which every edge is the face of at least two 2-simplices. Then, if $\chi(L) \leq 2$, the complex $L$ has at least $\rho(L)$ vertices and at least $2\alpha_0(L) - 2 \chi(L)$ 2-simplices.
\end{lemma}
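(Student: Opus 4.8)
The plan is to exploit the hypothesis that every edge lies in at least two 2-simplices in order to get a double-counting inequality relating $\alpha_1(L)$ and $\alpha_2(L)$, and then feed this into the Euler characteristic identity $\chi(L) = \alpha_0(L) - \alpha_1(L) + \alpha_2(L)$. First I would observe that, since each 2-simplex has exactly three edges and each edge is a face of at least two 2-simplices, counting incident (edge, 2-simplex) pairs in two ways gives $3\alpha_2(L) \geq 2\alpha_1(L)$. Combining this with the Euler relation in the form $\alpha_1(L) = \alpha_0(L) + \alpha_2(L) - \chi(L)$ yields $3\alpha_2(L) \geq 2\alpha_0(L) + 2\alpha_2(L) - 2\chi(L)$, that is $\alpha_2(L) \geq 2\alpha_0(L) - 2\chi(L)$, which is exactly the claimed bound on the number of 2-simplices (note this is a genuine bound, i.e.\ nonnegative, precisely because $\chi(L) \leq 2 \leq \alpha_0(L)$).

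For the bound on the number of vertices, I would instead combine $3\alpha_2(L) \geq 2\alpha_1(L)$ with the trivial bound $\alpha_1(L) \leq \binom{\alpha_0(L)}{2}$, coming from the fact that $L$ is a simplicial complex on $\alpha_0(L)$ vertices. This gives $\alpha_2(L) \geq \tfrac{2}{3}\alpha_1(L)$, hence via Euler $\chi(L) = \alpha_0(L) - \alpha_1(L) + \alpha_2(L) \geq \alpha_0(L) - \tfrac13 \alpha_1(L) \geq \alpha_0(L) - \tfrac16 \alpha_0(L)(\alpha_0(L)-1)$. Writing $n = \alpha_0(L)$ and rearranging, the inequality $\chi(L) \geq n - \tfrac16 n(n-1)$ becomes $n^2 - 7n + 6\chi(L) \geq 0$, whose relevant root is $\tfrac{7 + \sqrt{49 - 24\chi(L)}}{2}$; since $n$ is an integer this forces $n \geq \rho(\chi(L)) = \rho(L)$, as desired. (One should check that the other root $\tfrac{7 - \sqrt{49-24\chi(L)}}{2}$ is irrelevant, which follows because $L$ has strict dimension $2$ and is connected, so $n$ is bounded below by a constant larger than that root; for $\chi(L) \le 2$ the small root is at most $2$, while a connected complex with a 2-simplex has $n \ge 3$.)

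The only delicate point—hardly an obstacle—is the last step, namely arguing that a quadratic inequality of the form $n^2 - 7n + 6\chi(L) \geq 0$ together with connectedness and strict dimension $2$ actually forces $n$ to lie above the \emph{larger} root rather than below the smaller one. This is where the ceiling in the definition of $\rho$ and the standing assumption $\chi(L)\le 2$ enter. Everything else is the two elementary double counts ($3\alpha_2 \geq 2\alpha_1$ from the edge-in-two-triangles hypothesis, and $\alpha_1 \leq \binom{n}{2}$ from simpliciality) plugged into Euler's formula, so I expect the proof to be short.
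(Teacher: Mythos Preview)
Your argument is essentially identical to the paper's: both proofs use the double count $3\alpha_2(L)\ge 2\alpha_1(L)$ and the bound $\alpha_1(L)\le\binom{\alpha_0(L)}{2}$, plug these into the Euler formula to get the quadratic inequality on $\alpha_0(L)$, and derive $\alpha_2(L)\ge 2\alpha_0(L)-2\chi(L)$ directly from Euler plus $3\alpha_2\ge 2\alpha_1$.

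One small numerical slip: your claim that ``for $\chi(L)\le 2$ the small root is at most $2$'' is false at $\chi(L)=2$, where the small root equals $3$, so the bound $n\ge 3$ from ``there exists a $2$-simplex'' does not by itself exclude $n$ sitting at the small root. The paper handles $\chi(L)=1,2$ by a separate easy check; in your framework the fix is just as easy: the hypothesis that every edge lies in at least two $2$-simplices, together with strict dimension $2$, already forces $n\ge 4$ (on three vertices there is at most one $2$-simplex, so no edge can lie in two), which rules out the small root in all cases.
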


\begin{proof}
Consider the Euler characteristic formula,
\[
	\chi(L) = \alpha_0(L) - \alpha_1(L) + \alpha_2(L).
\]
Since every edge of $L$ is the face of at least two 2-simplices, we see that $3\alpha_2(L) \geq 2\alpha_1(L)$. On the other hand, since $L$ is a simplicial complex it has at most $\binom{\alpha_0(L)}{2}$ edges. 
Then
\[
	6\chi(L) \geq 6\alpha_0(L) - \alpha_0(L)(\alpha_0(L) - 1).
\]
If $\chi(L) \leq 0$, the minimum strictly positive integer that satisfies this inequality is precisely $\rho(L) = \rho(\chi(L))$ and therefore $\alpha_0(L) \geq \rho(L)$. An easy analysis shows that $\alpha_0(L) \geq \rho(L)$ also when $\chi(L) = 1,2$. Finally, the claimed lower bound $\alpha_2(L) \geq 2\alpha_0(L) - 2 \chi(L)$ follows immediately from the Euler characteristic formula and the inequality $3\alpha_2(L) \geq 2\alpha_1(L)$.
\end{proof}

We are now ready to prove the main result of this section.

\begin{theo}\label{lower_bound}
Let $G$, $T$ be finitely presented groups. If $G$ satisfies property (A), $\chi_{\leq 2}(G)\leq 2$, and $\dim H_2(G) > 0$, then $\kappa(G \ast T) \geq 2\rho(\chi_{\leq 2}(G)) - 2 \chi_{\leq 2}(G)$.
\end{theo}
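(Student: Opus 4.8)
The plan is to assemble Lemma~\ref{main_tech} and Lemma~\ref{euler}, adding only an elementary monotonicity property of the integer function $k\mapsto 2\rho(k)-2k$ for $k\leq 2$.

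First I would reduce to a single complex: by the definition of $\kappa$ it suffices to show that every finite connected simplicial complex $K$ with $\pi_1(K)\cong G\ast T$ has $\alpha_2(K)\geq 2\rho(\chi_{\leq 2}(G))-2\chi_{\leq 2}(G)$, and then take the minimum over all such $K$. Fix one such $K$ (it has a $2$-simplex since $G\ast T$ is not free, as $\dim H_2(G)>0$). As $G$ satisfies property~(A) by hypothesis, Lemma~\ref{main_tech} produces a simplicial complex $L$ of dimension at most $2$ with $\alpha_2(L)\leq\alpha_2(K)$, with $\chi(L)\leq\chi_{\leq 2}(G)$, with $\dim H_2(L)=\dim H_2(G)$, and in which every edge is the face of at least two $2$-simplices. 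This $L$ is connected (in the proof of Lemma~\ref{main_tech} it arises as a wedge summand of a complex homotopy equivalent to the connected subcomplex $M\leq K$), and since $\dim H_2(L)=\dim H_2(G)>0$ we have $H_2(L)\neq 0$, so $L$ contains a $2$-simplex. Hence $L$ is a connected simplicial complex of strict dimension $2$, every edge of which lies in at least two $2$-simplices, with $\chi(L)\leq\chi_{\leq 2}(G)\leq 2$.

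Next I would apply Lemma~\ref{euler} to $L$: since $\chi(L)\leq 2$ it yields $\alpha_0(L)\geq\rho(L)=\rho(\chi(L))$ and hence
\[
	\alpha_2(K)\ \geq\ \alpha_2(L)\ \geq\ 2\alpha_0(L)-2\chi(L)\ \geq\ 2\rho(\chi(L))-2\chi(L).
\]
It remains to pass from $\chi(L)$ to $\chi_{\leq 2}(G)$ on the right-hand side. For integers $k\leq 2$ the radicand $49-24k$ is positive and does not decrease as $k$ decreases, so $\rho(k)=\lceil(7+\sqrt{49-24k})/2\rceil$ is non-increasing in $k$; therefore $2\rho(k-1)-2(k-1)\geq 2\rho(k)-2k+2>2\rho(k)-2k$, i.e.\ $k\mapsto 2\rho(k)-2k$ is strictly decreasing on $\{k\in\Z:k\leq 2\}$. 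Since $\chi(L)\leq\chi_{\leq 2}(G)\leq 2$, this gives $2\rho(\chi(L))-2\chi(L)\geq 2\rho(\chi_{\leq 2}(G))-2\chi_{\leq 2}(G)$. Combining with the displayed chain, $\alpha_2(K)\geq 2\rho(\chi_{\leq 2}(G))-2\chi_{\leq 2}(G)$, and taking the minimum over $K$ proves the theorem.

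The substantive work is already contained in Lemmas~\ref{main_tech} and~\ref{euler}, so this last step presents no real obstacle. The only points needing care are that the complex $L$ truly has strict dimension $2$ — which is precisely why the hypothesis $\dim H_2(G)>0$ is imposed — and that it is connected, so that Lemma~\ref{euler} applies, together with the small bookkeeping that $2\rho(\,\cdot\,)-2(\,\cdot\,)$ runs in the direction that lets the estimate at $\chi(L)$ imply the one claimed at $\chi_{\leq 2}(G)$.
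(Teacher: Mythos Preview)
Your proof is correct and follows essentially the same route as the paper: apply Lemma~\ref{main_tech} to get $L$, observe $L$ has strict dimension~$2$ because $\dim H_2(L)=\dim H_2(G)>0$, apply Lemma~\ref{euler}, and finish by the monotonicity of $k\mapsto 2\rho(k)-2k$. The paper's version is terser (it simply says ``$\rho$ is non-increasing'' and does not pause to verify that $L$ is connected), but the argument is the same.
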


\begin{proof}
Let $K$ be a simplicial complex of dimension 2 with fundamental group isomorphic to $G \ast T$. Since $G$ satisfies property (A), from Lemma \ref{main_tech} we obtain a simplicial complex $L$ with $\alpha_2(L) \leq \alpha_2(K)$, $\chi(L) \leq \chi_{\leq 2}(G)$ and such that every edge of $L$ is in at least two 2-simplices. Furthermore, there is an epimorphism $H_2(L) \to H_2(G)$, so that $\dim H_2(L) > 0$ and hence $L$ is of dimension 2. By Lemma \ref{euler}, $L$ has at least $2\rho(L) - 2\chi(L)$ 2-simplices. Now, since $\chi(L) \leq \chi_{\leq 2}(G)$ and $\rho$ is a non-increasing function, we conclude that $\alpha_2(L) \geq 2\rho(\chi_{\leq 2}(G)) - 2\chi_{\leq 2}(G)$, as desired.
\end{proof}

We may apply Theorem \ref{lower_bound} to the one-relator groups from Example \ref{rem_surfaces}. For instance, the theorem gives the lower bound $\kappa(BS(m,n)) \geq 14$ for Baumslag-Solitar groups with $m$, $n$ odd since $\chi(BS(m,n)) = 0$. We know that this bound is not sharp except for the fundamental group of the torus $\Z \oplus \Z = BS(1,1)$. But one would expect stronger lower bounds for one-relator groups with a large number of generators (and hence, small Euler characteristic). As we will see in the next section, the lower bound from Theorem \ref{lower_bound} is sharp for fundamental groups of surfaces.

\section{The simplicial complexity of surface groups}

In this section we derive the main results of the article. Recall that the number of 2-simplices in a minimal triangulation of a closed surface was computed by Ringel \cite{Rin}, in the non-orientable case, and by Jungerman 
and Ringel \cite{JR}, in the orientable case. 

\begin{theo}\label{jungerman}
(Jungerman and Ringel) Let $S$ be a closed surface different from the orientable surface of genus $2$ ($M_2$), the Klein bottle ($N_2$) and the non-orientable surface of genus $3$ ($N_3$). Then, we have the following formula for the number $\delta(S)$ of 2-simplices in a minimal triangulation of $S$:
\[
	\delta(S) = 2\rho(\chi(S)) - 2\chi(S).
\]
For the exceptional cases $M_2, N_2$ and $N_3$, it is necessary to replace $\delta(S)$ by $\delta(S) - 2$ in the formula.
\end{theo}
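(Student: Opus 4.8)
The plan is to split the statement into its easy half---the lower bound $\delta(S) \ge 2\rho(\chi(S)) - 2\chi(S)$---and its hard half, namely the construction of triangulations attaining it. I would derive the lower bound directly from Lemma \ref{euler}: a triangulation of a closed surface $S$ is a connected simplicial complex of dimension $2$ in which every edge lies on exactly two $2$-simplices and which has $\chi(S) \le 2$, so Lemma \ref{euler} gives $\alpha_0 \ge \rho(\chi(S))$ and $\delta(S) = \alpha_2 \ge 2\rho(\chi(S)) - 2\chi(S)$. Moreover, for a surface the relation $3\alpha_2 = 2\alpha_1$ holds with equality, so $\alpha_2 = 2\alpha_0 - 2\chi(S)$ exactly; hence equality in the lower bound is equivalent to the triangulation having the least possible number $\rho(\chi(S))$ of vertices, and in the cases where $\binom{\rho(\chi)}{2} = 3\rho(\chi) - 3\chi$ it is further equivalent to the $1$-skeleton being the complete graph $K_{\rho(\chi)}$.

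The substantial half is to produce, for every $S$ other than the three exceptions, a triangulation on exactly $\rho(\chi(S))$ vertices---equivalently, a cellular embedding in $S$, with every face a triangle, of $K_{\rho(\chi)}$ when the count above is tight, and of $K_{\rho(\chi)}$ with a controlled number of edges deleted otherwise. I would not reprove this but invoke it: it is the theorem of Ringel in the non-orientable case and of Jungerman and Ringel in the orientable case, proved by encoding the wanted embedding as a rotation system and assembling it through the current-graph technique that Ringel and Youngs developed for the Heawood map-colouring problem, with a long case split according to $\rho(\chi) \bmod 12$ together with parity bookkeeping to control orientability.

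For the exceptional surfaces $M_2$, $N_2$ and $N_3$ I would argue separately that no triangulation attains $V = \rho(\chi)$: for the Klein bottle, $V = 7$ would force $\alpha_1 = \binom{7}{2}$, i.e.\ the $1$-skeleton to be $K_7$, which does not embed in $N_2$ at all; for $M_2$ (where $\rho = 9$) and $N_3$ (where $\rho = 8$) the edge count does not force completeness of the $1$-skeleton, so a short ad hoc non-existence argument is needed. One then exhibits explicit triangulations on $\rho(\chi)+1$ vertices, which gives $\delta(S) = 2(\rho(\chi(S))+1) - 2\chi(S)$, exactly the ``replace $\delta(S)$ by $\delta(S)-2$'' version of the formula. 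The main obstacle is unambiguously the construction of the second paragraph: the existence of vertex-minimal triangulations is not a soft fact---its proof is the bulk of Ringel's monograph and rests entirely on the current-graph machinery and its many sub-cases---whereas the lower bound and the reduction of the exceptional cases are routine Euler-characteristic arithmetic.
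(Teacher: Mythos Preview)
The paper does not prove this theorem at all: it is quoted as a classical result, with the non-orientable case attributed to Ringel \cite{Rin} and the orientable case to Jungerman and Ringel \cite{JR}. There is therefore no ``paper's own proof'' to compare against; the authors simply cite it and use it as input for Propositions \ref{non_excep} and \ref{exceptional}.

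That said, your outline is an accurate summary of how the result is actually established in those references. Your derivation of the lower bound via Lemma \ref{euler} is exactly the Heawood-type inequality (and indeed this is why the paper includes Lemma \ref{euler}), and you correctly identify that on a surface $3\alpha_2 = 2\alpha_1$ forces $\alpha_2 = 2\alpha_0 - 2\chi(S)$, so equality in the bound is equivalent to $\alpha_0 = \rho(\chi(S))$. Your account of the hard half---encoding the sought triangulation by rotation systems and current graphs, with a case analysis modulo $12$---is the method of \cite{Rin, JR}, and you are right that this is where all the work lies. Your treatment of the exceptions is also accurate: for $N_2$ the count forces the $1$-skeleton to be $K_7$, which does not embed in the Klein bottle (Franklin), while for $N_3$ and $M_2$ the $1$-skeleton need not be complete and the non-existence requires a short separate argument; the replacement triangulations on $\rho(\chi)+1$ vertices then give $\delta(S) = 2\rho(\chi(S)) - 2\chi(S) + 2$.

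In short: your proposal is correct and goes well beyond what the paper itself does, which is simply to invoke the theorem.
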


As it was observed in Example \ref{rem_surfaces}, the fundamental group of a non-simply connected closed surface $S$ satisfies property (A). Hence, we may apply Theorem \ref{lower_bound} to groups of the form $\pi_1(S) \ast T$ obtaining the following corollary.

\begin{prop}\label{non_excep}
Let $S$ be a non-simply connected closed surface. Then $\kappa(\pi_1(S) \ast T) \geq 2\rho(\chi(S)) - 2\chi(S)$. In particular, if $S$ is non-exceptional, then $\kappa(\pi_1(S) \ast T) \geq \delta(S)$.
\end{prop}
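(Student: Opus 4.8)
The plan is to obtain Proposition \ref{non_excep} as an essentially immediate consequence of Theorem \ref{lower_bound}, applied to $G = \pi_1(S)$, together with Theorem \ref{jungerman} for the final assertion. Thus the substance of the argument reduces to checking that $G = \pi_1(S)$ meets the three hypotheses of Theorem \ref{lower_bound}: property (A), the bound $\chi_{\leq 2}(G) \leq 2$, and $\dim H_2(G) > 0$.

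The first of these is already recorded in Example \ref{rem_surfaces}: the fundamental group of a non-simply connected closed surface satisfies property (A) by Poincaré Duality. For the other two I would compare the low-degree (co)homology of the group $\pi_1(S)$ with that of the surface $S$ itself. When $S \neq \R P^2$ the surface is aspherical (its universal cover is $\R^2$), hence a $K(\pi_1(S),1)$, so $H_n(\pi_1(S);\F_2) \cong H_n(S;\F_2)$ for all $n$; in particular $\chi_{\leq 2}(\pi_1(S)) = \chi(S)$ and $\dim H_2(\pi_1(S);\F_2) = \dim H_2(S;\F_2) = 1$. The surface $\R P^2$ has to be handled separately, since it is not aspherical, but here a direct computation (for instance via the classifying map $\R P^2 \to \R P^\infty = K(\Z_2,1)$, which induces isomorphisms on $H_n(-;\F_2)$ for $n \leq 2$) gives the same conclusion: $\chi_{\leq 2}(\Z_2) = \chi(\R P^2) = 1$ and $\dim H_2(\Z_2;\F_2) = 1 > 0$. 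Since $\chi(S) \leq 1$ for every non-simply connected closed surface, the inequality $\chi_{\leq 2}(\pi_1(S)) \leq 2$ holds in all cases.

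With the hypotheses verified, Theorem \ref{lower_bound} yields
\[
	\kappa(\pi_1(S) \ast T) \;\geq\; 2\rho\bigl(\chi_{\leq 2}(\pi_1(S))\bigr) - 2\chi_{\leq 2}(\pi_1(S)) \;=\; 2\rho(\chi(S)) - 2\chi(S),
\]
which is the first statement. For the second, when $S$ is non-exceptional (i.e. $S \notin \{M_2, N_2, N_3\}$) Theorem \ref{jungerman} identifies the right-hand side with $\delta(S)$, whence $\kappa(\pi_1(S)\ast T)\ge\delta(S)$. I expect the only point requiring genuine care to be the separate treatment of $\R P^2$, where asphericity fails and one must instead invoke the agreement of the (co)homology of $\Z_2$ with that of $\R P^2$ in degrees $\leq 2$; everything else is a direct substitution into estimates already established.
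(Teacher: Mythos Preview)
Your proposal is correct and follows essentially the same approach as the paper: apply Theorem \ref{lower_bound} to $G=\pi_1(S)$, verify its hypotheses via Example \ref{rem_surfaces} and the identity $\chi_{\leq 2}(\pi_1(S))=\chi(S)$ (using asphericity for $S\neq\R P^2$ and $\R P^\infty=K(\Z_2,1)$ for $S=\R P^2$), and then invoke Theorem \ref{jungerman} for the non-exceptional case. If anything, you are slightly more explicit than the paper in checking the hypothesis $\dim H_2(G)>0$.
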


\begin{proof}
By Theorem \ref{lower_bound}, we have that $\kappa(\pi_1(S) \ast T) \geq 2\rho(\chi_{\leq 2}(\pi_1(S))) - \chi_{\leq 2}(\pi_1(S))$. Hence, it is enough to see that $\chi_{\leq 2}(\pi_1(S)) = \chi(S)$ for all non-simply connected closed surfaces $S$. This identity is clear for surfaces $S$ different from the real projective plane $\R P^{2}$ since these surfaces are aspherical. For $\R P^{2}$, notice that the infinite real projective space $\R P^{\infty}$ is an Eilenberg-MacLane space for $\pi_1(\R P^{2}) = \Z_{2}$, so we have $\chi_{\leq 2}(\pi_1(S)) = \chi(S)$ also for $S = \R P^{2}$.
\end{proof}

It remains to handle the exceptional cases. Observe that for an exceptional surface $S$ (i.e. $S = N_2, N_3$ or $M_2$), Proposition \ref{non_excep} provides the lower bound $\kappa(\pi_1(S) \ast T) \geq 2\rho(\chi(S)) - 2\chi(S)$, which is slightly weaker than required because $\delta(S) = 2 \rho(\chi(S)) - 2 \chi(S) + 2$ in these cases. So, for the exceptional surfaces, we will need to refine the proof of the lower bound of Theorem \ref{lower_bound}.

\begin{lemma}\label{prop_A}
Let $S$ be a non-simply connected closed surface (either exceptional or non-exceptional) and let $K$ be a connected simplicial complex of dimension $2$ with fundamental group isomorphic to $\pi_1(S) \ast T$. Let $L$ be the simplicial complex obtained from $K$ by applying Lemma \ref{main_tech}. If $\chi(L) = \chi(S)$, then $L$ satisfies property (A).
\end{lemma}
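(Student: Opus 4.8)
The plan is to transport property~(A) from $G$ to $L$ along the auxiliary maps produced in the proof of Lemma~\ref{main_tech}. Recall that, together with $L$, that proof yields an integer $m\ge 0$ and a map $f\colon L\vee_{i=1}^{m}S^{1}\to K_G\vee K_T$ inducing isomorphisms on $H_n$ for $n=0,1$, for which $p\circ f_*\colon H_2(L)\to H_2(G)$ is surjective, and whose induced map $f^*\colon H^1(G)\times H^1(T)\to H^1(L)\times H^1(\vee_{i=1}^{m}S^{1})$ is an isomorphism such that $(f^*)^{-1}$ restricts to a monomorphism $H^1(\vee_{i=1}^{m}S^{1})\to H^1(T)$. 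The first step is to observe that the hypothesis $\chi(L)=\chi(S)$ promotes this monomorphism to an isomorphism. Indeed, the proof of Lemma~\ref{main_tech} gives $\chi(L)=\chi_{\leq 2}(G)-\dim H_1(T)+m$, while by the proof of Proposition~\ref{non_excep} we have $\chi_{\leq 2}(G)=\chi_{\leq 2}(\pi_1(S))=\chi(S)$; hence $\chi(L)=\chi(S)$ forces $m=\dim H_1(T)$, so that $\dim H^1(\vee_{i=1}^{m}S^{1})=m=\dim H^1(T)$ and the above monomorphism is an isomorphism. Consequently $f^*$ maps the summand $H^1(T)$ isomorphically onto the summand $H^1(\vee_{i=1}^{m}S^{1})$; in particular $f^*\bigl(\{0\}\times H^1(T)\bigr)=\{0\}\times H^1(\vee_{i=1}^{m}S^{1})$. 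Dualizing the surjection $p\circ f_*$ also shows that $f^*$ restricted to the $H^2(G)$-summand of $H^2(K_G\vee K_T)$ is injective, with image in $H^2(L\vee_{i=1}^{m}S^{1})=H^2(L)$.

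With this in hand, the argument follows the cup-product computation already used inside the proof of Lemma~\ref{main_tech}. Let $\bar\alpha\in H^1(L)$ be non-trivial; I must exhibit $\bar\beta\in H^1(L)$ with $\bar\alpha\cup\bar\beta\neq 0$ in $H^2(L)$. Regard $\bar\alpha$ as the class $(\bar\alpha,0)\in H^1(L\vee_{i=1}^{m}S^{1})$ and set $(\alpha,\delta):=(f^*)^{-1}(\bar\alpha,0)\in H^1(G)\times H^1(T)$. If $\alpha$ were zero, then $(\bar\alpha,0)=f^*(0,\delta)$ would lie in $f^*\bigl(\{0\}\times H^1(T)\bigr)=\{0\}\times H^1(\vee_{i=1}^{m}S^{1})$, forcing $\bar\alpha=0$, a contradiction; hence $\alpha\neq 0$. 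Since the cohomology ring of $G$ has property~(A), choose $\beta\in H^1(G)$ with $\alpha\cup\beta\neq 0$ in $H^2(G)$, and put $(\bar\beta,b):=f^*(\beta,0)$. Because the cup product of a positive-degree class coming from one factor of a wedge with one coming from the other factor vanishes, $(\alpha,\delta)\cup(\beta,0)=(\alpha\cup\beta,0)$ in $H^2(K_G\vee K_T)$ and $(\bar\alpha,0)\cup(\bar\beta,b)=(\bar\alpha\cup\bar\beta,0)$ in $H^2(L\vee_{i=1}^{m}S^{1})=H^2(L)$. Applying the ring homomorphism $f^*$ to the first identity and using $f^*(\alpha,\delta)=(\bar\alpha,0)$ and $f^*(\beta,0)=(\bar\beta,b)$ gives $\bar\alpha\cup\bar\beta=f^*(\alpha\cup\beta,0)$, which is non-zero by the injectivity of $f^*$ on the $H^2(G)$-summand and the choice of $\beta$. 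Thus $\bar\beta$ witnesses property~(A) for $\bar\alpha$, so $L$ satisfies property~(A).

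The only step I expect to require care is the first paragraph, namely the upgrade from $m\le\dim H_1(T)$ to the equality $m=\dim H_1(T)$ furnished by $\chi(L)=\chi(S)$: this is exactly what forbids a non-trivial $\bar\alpha\in H^1(L)$ whose $f^*$-preimage has trivial $H^1(G)$-component, a class against which property~(A) of $G$ would be powerless. The remainder is bookkeeping — keeping track of the wedge summands and the dualized maps, and invoking the standard vanishing of cup products between the two factors of a wedge.
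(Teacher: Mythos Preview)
Your proof is correct and follows essentially the same approach as the paper's: both use the Euler-characteristic identity to upgrade the monomorphism $H^{1}(\vee_{i=1}^{m}S^{1})\to H^{1}(T)$ to an isomorphism, deduce that the $H^{1}(G)$-component $\alpha$ of $(f^{*})^{-1}(\bar\alpha,0)$ is non-zero, and then transport the witnessing cup product via naturality. The only cosmetic difference is that you justify the non-vanishing of $f^{*}(\alpha\cup\beta,0)$ by dualizing the surjection $p\circ f_{*}$, whereas the paper points back to the explicit evaluation argument in the proof of Lemma~\ref{main_tech}; these are the same fact.
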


\begin{proof}
From the proof of Lemma \ref{main_tech} applied to $K$, we obtain a continuous map $f: L \vee_{i=1}^{m} S^{1} \to K_{\pi_1(S)} \vee K_{T}$ which induces isomorphism in $H_n$ for $n = 0,1$ and such that the inverse of $f^{*}: H^{1}(\pi_1(S)) \times H^{1}(T) \to H^{1}(L) \times H^{1}(\vee_{i=1}^{m} S^{1})$ restricts to give a monomorphism $h: H^{1}(\vee_{i=1}^{m} S^{1}) \to H^{1}(T)$. On the other hand, $\chi(L) = \chi(S) - (\dim H_1(T) - m)$ and since $\chi(L) = \chi(S)$ by assumption, $m = \dim H_1(T)$ and so $h$ is an isomorphism. Take $a \in H^{1}(L)$ a non-trivial class and let $(\alpha, \delta) \in H^{1}(\pi_1(S)) \times H^{1}(T)$ be the unique class such that $f^{*}(\alpha, \delta) = (a,0)$. Notice that $\alpha \neq 0$. Indeed, $h$ being an isomorphism, the equality $\alpha = 0$ would contradict the injectivity of the inverse of $f^{*}: H^{1}(\pi_1(S)) \times H^{1}(T) \to H^{1}(L) \times H^{1}(\vee_{i=1}^{m} S^{1})$. Hence, there exists a non-trivial class $\beta \in H^{1}(\pi_1(S))$ with $\alpha \cup \beta \neq 0 \in H^{2}(\pi_1(S))$. Put $(b,\lambda) = f^{*}(\beta, 0)$. Note that $f^{*}(\alpha \cup \beta, 0)\neq 0$ (see the proof of Lemma \ref{main_tech}).  By naturality of the cup product,
\[
	0 \neq f^{*}(\alpha \cup \beta, 0) = f^{*}((\alpha, \delta) \cup (\beta, 0)) = f^{*}(\alpha,\delta) \cup f^{*}(\beta, 0) = (a,0) \cup (b, \lambda) = (a \cup b, 0).
\]
Then $a \cup b \neq 0$ and hence $L$ satisfies property (A).
\end{proof}

The next result, which was proved in \cite{BM}, states roughly that if the complex $L$ obtained from Lemma \ref{main_tech} additionally satisfies property (A), it is close to being homeomorphic to a surface.

\begin{prop}\label{homeosurf}
Let $K$ be a simplicial complex of dimension $2$ such that each edge of $K$ is the face of exactly two 2-simplices and let $S$ be a closed surface. Suppose that the homology of $K$ is isomorphic to the homology of $S$ and that the cohomology ring of $K$ satisfies property (A). Then $K$ is homeomorphic to $S$.
\end{prop}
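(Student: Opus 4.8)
The plan is to show that the hypothesis on edges makes $K$ a closed pseudo-surface, to use property (A) to rule out its singular (``pinch'') points, and then to identify $K$ with $S$ by the classification of surfaces. First I would record that $K$ is connected (since $H_0(K)\cong H_0(S)$) and that, because every edge of $K$ is a face of exactly two $2$-simplices, the link of any vertex $v$ is a graph all of whose vertices have degree $2$, i.e.\ a disjoint union of circles. If all these links are connected then $K$ is already a triangulated closed surface and only the last step remains; the real issue is the possible presence of vertices with disconnected link. To handle these I would pass to the \emph{normalization} $\pi\colon \widetilde K\to K$, obtained by replacing each vertex $v$ by one copy $v_1,\dots,v_{k_v}$ for each connected component of $\lk(v)$ and distributing the incident simplices accordingly. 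A short verification — the two triangles through a fixed edge of $K$ lie in a common component of each pertinent link — shows that $\widetilde K$ is again a simplicial complex, now with every vertex link a single circle, hence a disjoint union of finitely many triangulated closed surfaces; and $\pi$ is the quotient map re-identifying $v_1,\dots,v_{k_v}$ to $v$ for each $v$.

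Next I would read off homological consequences of $\pi$. Let $A\subseteq\widetilde K$ be the (finite, discrete) set of vertices collapsed by $\pi$ and $B=\pi(A)\subseteq K$ the set of pinch points, so that $K$ is the pushout of $\widetilde K\hookleftarrow A\xrightarrow{\ \pi|_A\ }B$. Since $A\hookrightarrow\widetilde K$ is a cofibration, this pushout yields a Mayer--Vietoris sequence $\cdots\to\widetilde H_n(A)\to\widetilde H_n(\widetilde K)\oplus\widetilde H_n(B)\to\widetilde H_n(K)\to\widetilde H_{n-1}(A)\to\cdots$, and $\widetilde H_*$ of the finite sets $A$, $B$ is concentrated in degree $0$. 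In degrees $\ge 2$ this gives that $\pi_*\colon H_n(\widetilde K)\to H_n(K)$ is an isomorphism; in particular $H_2(\widetilde K)\cong H_2(K)\cong H_2(S)=\F_2$, and since a disjoint union of $c$ closed surfaces has $H_2=\F_2^c$ over $\F_2$, it follows that $\widetilde K$ is \emph{connected}, i.e.\ a single triangulated closed surface $\Sigma$. Dualizing, $\pi^{*}\colon H^{2}(K)\to H^{2}(\Sigma)$ is an isomorphism, and the degree-$1$ part of the sequence (using that $K$ and $\Sigma$ are connected) shows that $\pi^{*}\colon H^{1}(K)\to H^{1}(\Sigma)$ is surjective with kernel of dimension $p:=\sum_{v}(k_v-1)$, the total ``excess'' number of link components.

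Property (A) now finishes the surface part. If $p>0$ there is a nonzero class $\alpha\in\ker\big(\pi^{*}\colon H^{1}(K)\to H^{1}(\Sigma)\big)$; by property (A) we may choose $\beta\in H^{1}(K)$ with $\alpha\cup\beta\neq 0$ in $H^{2}(K)$. But then $\pi^{*}(\alpha\cup\beta)=\pi^{*}\alpha\cup\pi^{*}\beta=0$, contradicting the injectivity of $\pi^{*}$ on $H^{2}$. Hence $p=0$: every vertex link of $K$ is a single circle, and $K=\Sigma$ is a triangulated closed surface. Finally, $K$ is a closed surface whose $\F_2$-homology — hence Euler characteristic — agrees with that of $S$, so the classification of closed surfaces yields $K\cong S$ (when $S$ is orientable one additionally invokes the ring structure of $H^{*}(K)$, e.g.\ the Wu relation $x\cup x=w_1\cup x$, to exclude the non-orientable surface of the same Euler characteristic).

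The main obstacle is the middle paragraph: checking carefully that the normalization is an honest simplicial complex with circle vertex links, and running the Mayer--Vietoris sequence so that it simultaneously forces $\widetilde K$ to be connected and pins down $\ker\pi^{*}$ on $H^{1}$. Once that bookkeeping is in place, the property-(A) step is a one-line cup-product computation and the appeal to the surface classification is routine.
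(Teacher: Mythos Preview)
The paper does not actually prove this proposition; it simply cites \cite[Proposition~3.1]{BM}. Your approach---pass to the normalization $\widetilde K$ of the pseudo\-surface $K$, use Mayer--Vietoris to see that $H_2(\widetilde K;\F_2)\cong H_2(S;\F_2)=\F_2$ forces $\widetilde K$ to be a single closed surface $\Sigma$, and then use property~(A) together with the injectivity of $\pi^{*}$ on $H^{2}$ to kill the kernel of $\pi^{*}$ on $H^{1}$ and hence the pinch points---is the natural one and almost certainly the argument of \cite{BM}. The bookkeeping you outline (that $\widetilde K$ is simplicial with circle links, and the exact sequence computing $\dim\ker\pi^{*}|_{H^1}=\sum_v(k_v-1)$) is routine and goes through as you say.

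There is, however, a genuine gap in your final identification step. Once you know $K$ is a closed surface with the same $\F_2$-homology as $S$, you only get $\chi(K)=\chi(S)$; over $\F_2$ this does \emph{not} distinguish $M_g$ from $N_{2g}$. Your proposed fix---invoke the Wu relation $x\cup x=w_1\cup x$ in $H^{*}(K)$---does not help: the hypotheses give you property~(A) for $H^{*}(K)$ but say nothing further about its ring structure, and both $M_g$ and $N_{2g}$ satisfy property~(A). Indeed, taken literally with $\F_2$ coefficients the statement is false: a triangulated Klein bottle $K$ satisfies all hypotheses with $S$ the torus, yet $K\not\cong S$. The result becomes correct if ``homology'' is read with integral coefficients (so that $H_2$ detects orientability), or if the conclusion is weakened to ``$K$ is a closed surface with $\chi(K)=\chi(S)$''; in \cite{BM} one of these readings is presumably intended. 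For the applications in this paper the weakened conclusion is not quite enough on its own (e.g.\ for $S=N_2$ one must still exclude $L\cong M_1$), so the extra input has to come from integral coefficients or from the finer cup-product information carried by the map $f$ in Lemma~\ref{prop_A}. You should make this step explicit rather than leave it as a parenthetical.
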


For a proof, see \cite[Proposition 3.1]{BM}.

\begin{prop}\label{exceptional}
Let $S = N_2, N_3$ or $M_2$. Then $\kappa(\pi_1(S) \ast T) \geq \delta(S)$.
\end{prop}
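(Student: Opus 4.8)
The plan is to combine the lower bound machinery with the rigidity statement of Proposition \ref{homeosurf}, using a dichotomy on the Euler characteristic of the complex $L$ produced by Lemma \ref{main_tech}. Fix an exceptional surface $S$ (so $S = N_2, N_3$ or $M_2$, and $\delta(S) = 2\rho(\chi(S)) - 2\chi(S) + 2$), and let $K$ be any $2$-dimensional simplicial complex with $\pi_1(K) \cong \pi_1(S) \ast T$; I must show $\alpha_2(K) \geq \delta(S)$. Apply Lemma \ref{main_tech} to obtain a complex $L$ with $\alpha_2(L) \leq \alpha_2(K)$, with $\chi(L) \leq \chi_{\leq 2}(\pi_1(S)) = \chi(S)$, with $\dim H_2(L) = \dim H_2(\pi_1(S)) = \dim H_2(S) > 0$ (so $L$ genuinely has dimension $2$), and with every edge of $L$ lying in at least two $2$-simplices. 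If $\chi(L) \leq \chi(S) - 1$, then since $\rho$ is non-increasing, Lemma \ref{euler} gives $\alpha_2(L) \geq 2\rho(\chi(L)) - 2\chi(L) \geq 2\rho(\chi(S)-1) - 2(\chi(S)-1)$, and a short arithmetic check (using the explicit formula for $\rho$ at the relevant small values of $\chi(S)$, namely $\chi(S) = 0$ for $N_2$, $\chi(S) = -1$ for $N_3$, and $\chi(S) = -2$ for $M_2$) shows this quantity is at least $2\rho(\chi(S)) - 2\chi(S) + 2 = \delta(S)$. So in this regime we are already done.

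The remaining case is $\chi(L) = \chi(S)$. Here Lemma \ref{prop_A} applies and tells us that $L$ satisfies property (A). I would then argue that $L$ must in fact have the homology of $S$: we know $\dim H_2(L) = \dim H_2(S)$, and from the construction in Lemma \ref{main_tech} the map $f \colon L \vee_{i=1}^m S^1 \to K_{\pi_1(S)} \vee K_T$ induces isomorphisms on $H_0$ and $H_1$ of $L$ with $H_*(\pi_1(S)) = H_*(S)$ once $m = \dim H_1(T)$ (which holds precisely because $\chi(L) = \chi(S)$, as in the proof of Lemma \ref{prop_A}); with $\F_2$ coefficients and the surface having no higher homology this pins down $H_*(L) \cong H_*(S)$. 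Now, if every edge of $L$ were the face of \emph{exactly} two $2$-simplices, Proposition \ref{homeosurf} would force $L \cong S$, whence $\alpha_2(L) = \delta(S)$ or more, giving the bound. The subtlety is that Lemma \ref{main_tech} only guarantees each edge is in \emph{at least} two $2$-simplices. So I would handle the possibility that some edge $e$ lies in three or more $2$-simplices: in that case $L$ has strictly more $2$-simplices than a surface triangulation would, and I would pass to a sub-surface or quotient — more precisely, one can delete $2$-simplices to reach a pseudo-surface while tracking the Euler characteristic and the property-(A) structure — or else argue directly that $3\alpha_2(L) > 2\alpha_1(L)$ strictly, so the inequality chain in Lemma \ref{euler} has slack and already yields $\alpha_2(L) \geq 2\alpha_0(L) - 2\chi(S) + 1 \geq \delta(S)$ after noting $\alpha_0(L) \geq \rho(\chi(S))$, with one extra unit squeezed out from the strict inequality plus the known minimal vertex counts of the exceptional surfaces. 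Combining with the edge-in-exactly-two case handled by Proposition \ref{homeosurf}, every case gives $\alpha_2(K) \geq \alpha_2(L) \geq \delta(S)$.

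The main obstacle I anticipate is exactly this last point: closing the gap between "each edge in at least two $2$-simplices" and the "exactly two" hypothesis of Proposition \ref{homeosurf}, and doing so in a way that still produces the extra $+2$ over the naive bound $2\rho(\chi(S)) - 2\chi(S)$. The two sub-cases (an edge in $\geq 3$ faces, versus the honest pseudo-surface case) need slightly different arguments, and in the pseudo-surface case one must be careful that $L$ is genuinely a closed surface triangulation — i.e. check there are no singular vertices — before invoking the Jungerman–Ringel value $\delta(S)$; property (A) together with the surface homology is what rules out such pathologies via Proposition \ref{homeosurf}. I would also double-check the small-$\chi$ arithmetic in the first case explicitly for each of $N_2, N_3, M_2$, since the inequality $2\rho(\chi(S)-1) - 2(\chi(S)-1) \geq 2\rho(\chi(S)) - 2\chi(S) + 2$ is where the exceptional $+2$ has to come from on the nose.
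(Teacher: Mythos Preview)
Your overall architecture matches the paper's: reduce via Lemma \ref{main_tech} to a complex $L$ with $\chi(L)\le\chi(S)$ and every edge in at least two $2$-simplices, dispose of the regime $\chi(L)<\chi(S)$ by the monotonicity of $\rho$, and in the borderline regime $\chi(L)=\chi(S)$ use Lemma \ref{prop_A} plus Proposition \ref{homeosurf}. Your treatment of the case $\chi(L)<\chi(S)$ and of the sub-case ``every edge in exactly two $2$-simplices'' is correct and essentially the paper's argument.

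The gap is in the remaining sub-case: $\chi(L)=\chi(S)$ with some edge in at least three $2$-simplices. There the strict inequality $3\alpha_2(L)\ge 2\alpha_1(L)+1$ only yields
\[
\alpha_2(L)\ \ge\ 2\alpha_0(L)-2\chi(S)+1\ \ge\ 2\rho(\chi(S))-2\chi(S)+1\ =\ \delta(S)-1,
\]
one short of what you need. Your appeal to ``the known minimal vertex counts of the exceptional surfaces'' conflates two different objects: those counts concern triangulations of $S$, whereas $L$ is an arbitrary $2$-complex for which Lemma \ref{euler} only gives $\alpha_0(L)\ge\rho(\chi(L))$, and equality can occur. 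The alternative plan (delete $2$-simplices to reach a pseudo-surface while ``tracking property (A)'') is not viable as stated, since removing simplices alters $H_2$ and there is no reason property (A) survives.

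What the paper does to close this gap is: first add the further split $\alpha_0(L)>\rho(S)$ versus $\alpha_0(L)=\rho(S)$ (the former immediately gives $\alpha_2(L)\ge\delta(S)$). In the tight case $\alpha_0(L)=\rho(S)$, $\chi(L)=\chi(S)$, an explicit face count for each of $N_2,N_3,M_2$ forces either $3\alpha_2(L)=2\alpha_1(L)$ (so Proposition \ref{homeosurf} applies and contradicts Theorem \ref{jungerman}) or $3\alpha_2(L)=2\alpha_1(L)+1$, meaning \emph{exactly one} edge lies in three $2$-simplices and all others in two. That last configuration is then ruled out by a parity argument: the link of an endpoint of that edge would be a graph with all vertices of degree two except one of degree three, contradicting the handshake lemma. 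This parity step is the missing idea in your proposal.
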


\begin{proof}
Let $K$ be a simplicial complex with fundamental group isomorphic to $\pi_1(S) \ast T$. From Lemma \ref{main_tech}, and keeping the notations of the proof of Theorem \ref{lower_bound}, we obtain a complex $L$ with $\alpha_2(L) \leq \alpha_2(K)$, $\chi(L) \leq \chi_{\leq 2}(\pi_1(S)) = \chi(S)$ and such that every edge of $L$ is in at least two 2-simplices. By Lemma \ref{euler}, this implies that $L$ has at least $\rho(L) \geq \rho(S)$ vertices and at least $2\rho(L) - 2\chi(L)$ 2-simplices. Note that if any of the strict inequalities $\alpha_0(L) > \rho(S)$, $\chi(L) < \chi(S)$ holds, we have
\[
\alpha_2(L) \geq 2\rho(S) - 2\chi(S) + 2 = \delta(S)
\]
and there is nothing to prove. In view of this, in what follows we will suppose  that $\alpha_0(L) = \rho(S)$ and $\chi(L) = \chi(S)$. Observe that $L$ satisfies property (A) by Lemma \ref{prop_A}. Also, since $3\alpha_2(L) \geq 2\alpha_1(L)$, by the Euler characteristic formula for $L$ we have
\[
	3(\alpha_0(L) - \chi(L)) \leq \alpha_1(L) \leq \binom{\alpha_0(L)}{2}.
\]
We solve first the case $S = N_2$. By our assumption, we have that $\chi(L) = \chi(N_2) = 0$ and $\alpha_0(L) = \rho(N_2) = 7$. Hence, from the above inequality we learn that $\alpha_1(L) = 21$ and, since $\chi(L) = 0$, $\alpha_2(L) = 14$. Thus $3\alpha_2(L) = 2 \alpha_1(L) = 42$, from where it follows that every edge of $L$ is the face of exactly two 2-simplices. Since the homology of $L$ is isomorphic to the homology of $S$, by Proposition \ref{homeosurf} $L$ would be homeomorphic to $N_2$ contradicting Theorem \ref{jungerman}. Hence, $\alpha_0(L) > \rho(N_2)$ or $\chi(L) < \chi(S)$ and consequently $\alpha_2(L) \geq \delta(N_2)$.
\par For the surface $S = N_3$, we know that $\chi(L) = \chi(N_3) = -1$ and $\alpha_0(L) = \rho(N_3) = 8$. Hence, 
\[
3(\alpha_0(L) - \chi(L)) = 27 \leq \alpha_1(L) \leq 28 = \binom{\alpha_0(L)}{2}.
\]
Suppose first that $\alpha_1(L) = 27$, so that $\alpha_2(L) = 18$. Hence every edge of $L$ is the face of exactly two 2-simplices and from Proposition \ref{homeosurf}, $L$ is homeomorphic to $N_3$ in contradiction to Theorem \ref{jungerman}. Then, $\alpha_1(L) = 28$. In that case, $\alpha_2(L) = 19$ and since $57 = 3\alpha_2(L) = 2\alpha_1(L) + 1$, every edge of $L$ is in two 2-simplices except for one that is the face of three 2-simplices of $L$. The link of a vertex of this edge is a graph in which every vertex has degree two except for one that has degree three. This is impossible because the sum of the degrees of an undirected graph is even.
Therefore, $\alpha_0(L) > \rho(N_3)$ or $\chi(L) < \chi(N_3)$ and hence $\alpha_2(L) \geq \delta(N_3)$ as claimed.
\par Finally, when $S = M_2$ we have that $\chi(L) = \chi(M_2) = -2$ and $\alpha_0(L) = \rho(M_2) = 9$. In this case, we know that $\alpha_2(L) \geq 22 = \delta(M_2) - 2$ and we want to show that $L$ has at least $\delta(M_2) = 24$ 2-simplices. We will see that the cases $\alpha_2(L) = 22$, $\alpha_2(L) = 23$ are not possible. Suppose first that $\alpha_2(L) = 22$. Then, by the Euler characteristic formula, $\alpha_1(L) = 33$. Therefore, every edge of $L$ is the face of exactly two 2-simplices of $L$ and so, by Proposition \ref{homeosurf} $L$ should be homeomorphic to $M_2$, which contradicts Theorem \ref{jungerman}. If $\alpha_2(L) = 23$, it is $\alpha_1(L) = 34$, whence $69 = 3\alpha_2(L) = 2\alpha_1(L) + 1$. It follows that every edge of $L$ is the face of exactly two 2-simplices except for one which is the face of three 2-simplices. The same argument as before shows that this is impossible. We conclude that $\alpha_2(L) \geq \delta(M_2)$.
\end{proof}

We obtain Theorems \ref{kappa_surf} and \ref{kappa_surf_free_prod} as corollaries of the previous propositions.

\begin{proof}[Proof of Theorem \ref{kappa_surf}]
The upper bound $\kappa(\pi_1(S)) \leq \delta(S)$ is clear, while the lower bound follows from Propositions \ref{non_excep} and \ref{exceptional}.
\end{proof}

Note that, as a consequence of this result, the simplicial complexity of surface groups grows linearly on the genus. This was observed, in the orientable case, in \cite[Example 2]{BBB}.

\begin{proof}[Proof of Theorem \ref{kappa_surf_free_prod}]
Let $T$ be a finitely presentable group. By Propositions \ref{non_excep} and \ref{exceptional}, $\kappa(\pi_1(S) \ast T) \geq \delta(S)$ and since $\kappa(\pi_1(S)) = \delta(S)$ by Theorem \ref{kappa_surf}, the first claim holds. For the second one, it is enough to observe that the upper bound $\kappa(G \ast \Z) \leq \kappa(G)$ holds trivially for every finitely presentable group $G$. 
\end{proof}

Arguably, the conclusions of Theorems \ref{kappa_surf} and \ref{kappa_surf_free_prod} should hold at least for finitely presentable groups $G$ satisfying property (A) in place of surface groups. Unfortunately, we have not been able to establish these results in the general case.


\begin{thebibliography}{9999}

\bibitem[B]{B} Brown, K. S. \textit{Cohomology of groups}. Graduate Texts in Mathematics, 87. Springer-Verlag, New York-Berlin, 1982. {\rm x}+306 pp.
\bibitem[BBB]{BBB} Babenko I., Balacheff, F. and Bulteau, G. \textit{Systolic geometry and simplicial complexity for groups}. J. Reine. Angew. Math., in press.
\bibitem[BM]{BM} Borghini, E. and Minian, E.G. \textit{The covering type of closed surfaces and minimal triangulations}. J. Combin. Theory Ser. A \textbf{166} (2019), 1--10.
\bibitem[Bul]{Bul} Bulteau, G. \textit{Les groupes de petite complexité simpliciale}. 2015. hal-01168493.
\bibitem[JR]{JR} Jungerman, L. and Ringel, G. \textit{Minimal triangulations on orientable surfaces}. Acta Math. 145 (1980), 121--154.
\bibitem[Gr1]{Gr1} Gromov, M. \textit{Filling Riemannian manifolds}. J. Diff. Geom. \textbf{18} (1983), 1--147.
\bibitem[Gr2]{Gr2} Gromov, M. \textit{Systoles and intersystolic inequalities}. Actes de la Table Ronde de Géométrie Différentielle, Collection SMF \textbf{1} (1996), 291--362.
\bibitem[Ra]{Ra} Ratcliffe, J. \textit{The cohomology ring of a one-relator group}, in Contributions to Group Theory, edited by Appel, K. I., Ratcliffe, J. G., and Schupp, P. E. Contemporary Math., 33, Amer. Math. Soc., Providence, R.I., 1984. {\rm xi}+519 pp. 
\bibitem[Rin]{Rin} Ringel, G. \textit{Wie man die geschlossenen nichtorientierbaren {F}l\"achen in m\"oglichst wenig {D}reiecke zerlegen kann}. Math. Ann. 130 (1955), 317--326.
\bibitem[RS]{RS} Rudyak, Y. and Sabourau, S. \textit{Systolic invariants of groups and 2-complexes via Grushko decomposition}. Ann. Inst. Fourier (Grenoble) 58 (2008), no. 3, 777--800.
\end{thebibliography}
\end{document}